\newdimen\plusheight
\def\+{\;\lower\plusheight\hbox{$+$}\;}
\newdimen\minusheight
\def\-{\;\lower\minusheight\hbox{$-$}\;}
\newdimen\cdotsheight
\def\cds{\lower\cdotsheight\hbox{$\cdots$}}
\numberwithin{equation}{section}
\theoremstyle{plain}
\newtheorem{theorem}{Theorem}[section]
\newtheorem{lemma}{Lemma}[section]
\newtheorem{corollary}{Corollary}[section]
\newtheorem{definition}{Definition}[section]
\newtheorem{proposition}{Proposition}[section]
\newtheorem{note}{Note}[section]
\def\mytitle#1{\setcounter{equation}{0}
\setcounter{footnote}{0}
\begin{flushleft}\Large\textbf{#1}\end{flushleft}
\vspace{0.20cm}}
\def\myname#1{\leftline{{\large #1}}\vspace{-0.13cm}}
\def\myplace#1#2{\small\begin{flushleft}\textit{#1}\\
\texttt{#2}\end{flushleft}}
\def\myclassification#1{\small\noindent
Keywords : Weak $I^K$-convergence, weak $I^K$-Cauchy, condition AP($I,K$), weak* $I^K$-Cauchy, weak $I^K$-divergence\\
AMS subject classification: Primary: 40A05; Secondary: 40H05
       #1\vspace{0.5cm}}
\begin{document}
\mytitle{On weak $I^K$-Cauchy sequences in normed spaces}

\myname{$Amar Kumar Banerjee^{\dag}$\footnote{akbanerjee1971@gmail.com, akbanerjee@math.buruniv.ac.in} and  $Mahendranath~ Paul^{\dag}$\footnote{mahendrabktpp@gmail.com}}
\myplace{$\dag$Department of Mathematics, The University of Burdwan, 713104, India.} {}
\begin{abstract}
In this paper, we study on weak $I^K$-Cauchy condition as a generalization of weak $I^*$-Cauchy condition in a normed space. We investigate the relationship between weak $I$-Cauchy and weak $I^K$-Cauchy sequences using $AP(I,K)$-condition. Also we study on weak* $I^K$-Cauchy condition and weak $I^K$-divergence of sequences in the same space. 
 
\end{abstract}
\myclassification{}
\section{Introduction}
In the middle of twenth century, the usual idea of convergence was extended into statistical convergence by Fast \cite{17}(see also Steinhaus \cite{32}) using the concept of natural density of sets. After remarkable works of Fridy \cite{19} and salat \cite{31}, statistical convergence has become an active area of research in last fifty years. In 1985, Fridy \cite{18} introduced the concept statistically Cauchy sequences in the following way:\\
A sequence $\{x_n\}_{n\in \mathbb{N}}$ is said to be statistically Cauchy if for each $\epsilon(>0)$ there exists a number $N$ (depending on $\epsilon$) such that the set $K(\epsilon)=\{n\in \mathbb{N}:|x_n-x_N|\geq \epsilon\}$ has natural density zero, where the natural density of a subset $K$ of $\mathbb{N}$ is given as follows: Let $K_n$ denotes the set $\{k \in K:k\leq n$\} and $|K_n|$ stands for the cardinality of $K_n$.The natural density of $K$ is defined by $d(K)=\displaystyle{\lim_{n}}\frac{|K_n|}{n}$ if the limit exists. \\
In 2000, after the introductory work of weak statistical convergence by cannor et al. \cite{12}, Bhardwaj et al. \cite{1} studied on weak statistically Cauchy sequences in a normed space and proved that weak statistically Cauchy sequences are the same as weak statistically convergent sequences in a reflexive space.\\
Using the notion of the ideal $I$ of subsets of the set $\mathbb{N}$ of positive integers, the concepts of ideal convergence (namely $I$ and $I^*$-convergence) was introduced as a natural generalization of statistical convergence by Kostyrko et al.\cite{21,22}. The idea of $I$-Cauchy condition and $I^*$-Cauchy condition in metric spaces were studied by Dems \cite{13} and Nabiev et al. \cite{26} respectively. In 2010, Das et al.\cite{16} investigated some further results on $I$-Cauchyness and established an inter relation between $I$ and $I^*$-Cauchy condition using AP condition. They also introduced the concept of $I$ and $I^*$-divergence and studied on equivalency between them in \cite{26}. In 2010, Pehlivan et al. \cite{29} studied on weak $I$-convergence and weak $I$-Cauchy sequences in a normed space investigating their basic properties. Later Pal et al. \cite{27} studied on the idea of weak $I$ and $I^*$-divergence in normed linear spaces and made some important observations. The idea of weak* ideal convergence of sequences of functionals was studied by Bhardwaj et al. \cite{2}.\\
In 2010, The idea of $I^K$-convergence as a common generalization of $I^*$-convergence of sequences was given by Macaj et al. \cite{24}. In 2014, Das et al. \cite{15} studied on $I^K$-Cauchy and $I^K$-divergence and strengthen the importance of AP($I,K$) condition in the study of summability theory through double ideals. Later many works on $I$-convergence were done in metric spaces \cite{4,5,10,11}. Recently, in \cite{11.1} the idea of weak and weak* $I^K$-convergence of sequences were studied in a normed space.\\
In this paper we study the idea of weak $I^K$-Cauchy condition as a generalization of weak $I^*$-Cauchy condition. Next we show the interrelation between weak $I$-Cauchy and weak $I^K$-Cauchy using the AP$(I,K)$-condition. Also we study the idea of weak* $I^K$-Cauchy condition of sequences of functionals and investigate on weak $I^K$-divergence as a generalization of weak $I^*$-divergence.
\section{Basic definitions and notations}
\begin{definition}
Let $X$ be a non void set and a class $I\subset 2^X$ of subsets of $X$ is said to an ideal if\\
(i)$A,B\in I$ implies $A\cup B\in I$ and 
(ii)$A\in I, B\subset A$ implies $B\in I$. 
\end{definition}
It is easy to observe from condition (ii) that $\phi\in I$. $I$ is said to be nontrivial ideal if $I\neq 2^{X},\{\phi\}$. A nontrivial ideal $I$ is called admissible if it contains all the singletons of $X$. A nontrivial ideal $I$ is said to be non-admissible if it is not admissible.

If $I$ is a non-trivial ideal on a non-void set $X$ then $F=F(I)=\{A\subset X:X\setminus A \in I \}$ is clearly a filter on $X$ which is called the filter associated with the ideal $I$.
Throughout the paper, $\mathbb{N}$ denotes the set of all positive integers and $X$ denotes a normed linear space and $X^*$ is the dual of $X$.
\begin{definition}\cite{17}
A sequence $\{x_n\}$ in $X$ is said to be statistically convergent to $l$ if for every $\epsilon>0$ the set $K(\epsilon)=\{k\in\mathbb N:||x_k-l||\geq \epsilon\}$ has natural density zero.
\end{definition}
\begin{definition}\cite{12}
A sequence $\{x_n\}_{n\in\mathbb{N}}$ in $X$ is said to be weakly statistically convergent to $x\in X$ if for any $f\in X^*$, the sequence $\{f(x_n-x)\}_{n\in\mathbb{N}}$ is statistically convergent to $0$. In this case we write $w-st-\displaystyle{\lim_{n\rightarrow\infty}}{x_n}=x$.
\end{definition}
\begin{definition}\cite{1}
A sequence $\{x_n\}$ in a normed space $X$ is said to be weak statistically Cauchy
if $\{f(x_n)\}$ is statistically Cauchy for every $f\in X^*$.
\end{definition}

\begin{definition}\cite{21}
A sequence $\{x_n\}_{n\in \mathbb{N}}$ in $X$ is said to be $I$-convergent to $x$ if for any $\epsilon>0$ the set $A(\epsilon)=\{n\in\mathbb{N}:||x_n-x||\geq \epsilon\}\in I$. In this case we write $I-\displaystyle{\lim_{n\rightarrow\infty}}{x_n}=x$.
\end{definition}
\begin{definition}\cite{29}
A sequence $\{x_n\}_{n\in \mathbb{N}}$ in $X$ is said to be weakly $I$-convergent to $x\in X$ if for any $\epsilon>0$ and $f\in X^*$ the set $A(f,\epsilon)=\{n\in\mathbb{N}:|f(x_n)-f(x)|\geq \epsilon\}\in I$. In this case we write $w-I-\displaystyle{\lim_{n\rightarrow\infty}}{x_n}=x$.
\end{definition}
\begin{definition}(cf.\cite{29})
A sequence $\{x_n\}_{n\in \mathbb{N}}$ in $X$ is said to be weakly $I^*$-convergent to $x$ of $X$ if there exists a set $M\in F(I)$ such that the sequence $\{y_n\}_{n\in \mathbb{N}}\in X$ defined by
\[y_n=\left\{\begin {array}{ll}
        x_n & \mbox{if $n\in M$} \\
		x & \mbox{if $n\notin M$}
		\end{array}
		\right. \]
is weakly convergent to $x$. we denote it by the notation $w-I^*-\lim x_n=x$.
\end{definition} 
\begin{definition}
Let $I,K$ be two ideals on the set $\mathbb{N}$. A sequence $\{x_n\}_{n\in \mathbb{N}}$ in $X$ is said to be weakly $I^K$-convergent to $x$ of $X$ if there exists a set $M\in F(I)$ such that the sequence $\{y_n\}_{n\in \mathbb{N}}\in X$ defined by
\[y_n=\left\{\begin {array}{ll}
        x_n & \mbox{if $n\in M$} \\
		x & \mbox{if $n\notin M$}
		\end{array}
		\right. \]
is weakly $K$-convergent to $x$. we denote it by the notation $w-I^K-\lim x_n=x$.
\end{definition}

\begin{definition} \cite{24}
Let $K$ be an ideal $\mathbb{N}$. We denote $A\subset_K B$ whenever $A\setminus B\in K.$ If $A\subset_K B$ and $B\subset_K A$ then we denote $A\sim_K B$. Clearly $A\sim_K B \Leftrightarrow A\bigtriangleup B\in K$.\\
We say that a set $A$ is $K$-pseudo intersection of a system $\{A_n: n\in \mathbb{N}\}$ if $A\subset_K A_n$ holds for each $n\in \mathbb{N}$
\end{definition}
\begin{definition} \cite{24}\label{2}
Let $I,K$ be ideals on the set $X$. We say that $I$ has additive property with respect to $K$ or that the condition AP$(I,K)$ holds if any one of the following equivalent condition holds:
\item[(i)] For every sequence $\{A_i\}_{i\in \mathbb{N}}$ of sets from  $I$ there is $A\in I$ such that $A_i\subset_K A$ for all $i'$s.
\item[(ii)] Any sequence $\{F_i\}_{i\in \mathbb{N}}$ of sets from $F(I)$ has $K$-pseudo intersection in $F(I)$.
\item[(iii)] For every sequence $\{A_i\}_{i\in \mathbb{N}}$ of sets from  $I$ there exists a sequence$\{B_i\}_{i\in \mathbb{N}}\in I$ such that $A_n\sim_K B_n$ for $n\in \mathbb{N}$ and $B=\displaystyle{\cup_{n\in \mathbb{N}}}B_n\in I$.
\item[(iv)] For every sequence of mutually disjoint sets $\{A_i\}_{i\in \mathbb{N}}\subset I$ there exists a sequence $\{B_i\}_{i\in \mathbb{N}}\subset I$ such that $A_n\sim_K B_n$ for $n\in \mathbb{N}$ and $B=\displaystyle{\cup_{n\in \mathbb{N}}}B_n\in I$.
\item[(v)] For every non-decreasing sequence $A_1\subseteq A_2\subseteq \cdots \subseteq A_i\cdots $ of sets from $I$ $\exists$  a sequence $\{B_i\}_{i\in \mathbb{N}}\subset I$ such that $A_n\sim_K B_n$ for $n\in \mathbb{N}$ and $B=\displaystyle{\cup_{n\in \mathbb{N}}}B_n\in I$.
\item[(vi)] In the Boolean algebra $2^S/K$ the ideal $I$ corresponds to a $\sigma$-directed subset,i.e. every countable subset has an upper bound.
\end{definition}
Above definition is reformulation of the definition given below:
\begin{definition}\cite{15}
Let $I,K$ be ideals on the non-empty set $S$. We say that $I$ has additive property with respect to $K$ or that the condition AP$(I,K)$ holds if for every sequence $\{A_n\}$ of pairwise disjoint sets belonging to $I$, there exists a sequence $\{B_n\}$ belonging to $I$ such that $A_n\bigtriangleup B_n\in K$ for each $n$ and $\displaystyle \cup_{n\in \mathbb N}B_n\in I$.
\end{definition}
\section{Weak $I^K$-Cauchy}
\begin{definition}\cite{13}
A sequence $\{x_n\}_{n\in \mathbb{N}}$ is said to be $I$-Cauchy if for each $\epsilon>0$ there exists a number $N$ (depending on $\epsilon$) such that the set $\{n\in \mathbb{N}:|x_n-x_N|\geq \epsilon\}\in I$.
\end{definition}
\begin{definition}\cite{29}
A sequence $\{x_n\}$ in $X$ is said to be a weak $I$-Cauchy provided that for each $\epsilon>0$ and $f\in X^*$, there exists a possetive integer $N=N(\epsilon, f)$ such that $\{n\in \mathbb{N}:|f(x_n)-f(x_N)|\geq \epsilon\}\in I$.
\end{definition} 

\begin{note}\label{1}
It is easy to show that if $I$ and $I_1$ are any two ideals on $\mathbb{N}$ such that $I\subseteq I_1$ then for a sequence $\{x_n\}$ $w-I-\lim{x_n}=x$ implies $w-I_1-\lim{x_n}=x$.
\end{note}
\begin{lemma} (cf \cite{15})\label{3.1}
Let $X$ be a normed space and let $I$ be an ideal on $\mathbb{N}$. Then for any sequence $\{x_n\}$ the following conditions are equivalent:\\
(i) $\{x_n\}$ is weak $I$-Cauchy.\\
(ii) For any $\epsilon>0$ there is an integer $N$ such that $\{n\in \mathbb{N}:|f(x_n)-f(x_N)|<\epsilon\}\in F(I)$.\\
(iii) For every $\epsilon(>0)$ there exists a set $A\in I$ such that $m,n\notin A$ implies $|f(x_m)-f(x_n)|<\epsilon$.
\end{lemma}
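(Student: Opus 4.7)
The plan is to establish the cyclic chain (i)$\Rightarrow$(ii)$\Rightarrow$(iii)$\Rightarrow$(i); each implication will be almost immediate, relying only on (a) the duality between $I$ and the associated filter $F(I)$, (b) the triangle inequality applied to the linear functional $f$, and (c) the fact that a nontrivial ideal $I$ never contains $\mathbb{N}$. Throughout, $f\in X^*$ and $\epsilon>0$ are arbitrary.

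First I would observe that (i)$\Leftrightarrow$(ii) is purely set-theoretic: for the integer $N=N(\epsilon,f)$ supplied by (i), the set $\{n\in\mathbb{N}:|f(x_n)-f(x_N)|\geq\epsilon\}$ lies in $I$ precisely when its complement $\{n\in\mathbb{N}:|f(x_n)-f(x_N)|<\epsilon\}$ lies in $F(I)$, by the definition of the filter associated with an ideal. So no actual work is needed here beyond stating this duality.

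Next I would prove (i)$\Rightarrow$(iii). Given $\epsilon>0$ and $f\in X^*$, apply (i) with $\epsilon/2$ in place of $\epsilon$ to obtain an integer $N$ such that $A:=\{n\in\mathbb{N}:|f(x_n)-f(x_N)|\geq\epsilon/2\}\in I$. If $m,n\notin A$, the triangle inequality gives
\[
|f(x_m)-f(x_n)|\leq |f(x_m)-f(x_N)|+|f(x_N)-f(x_n)|<\tfrac{\epsilon}{2}+\tfrac{\epsilon}{2}=\epsilon,
\]
which is exactly condition (iii) with the witnessing set $A$.

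Finally I would prove (iii)$\Rightarrow$(i). Given $\epsilon>0$ and $f\in X^*$, let $A\in I$ be as in (iii). Since $I$ is a nontrivial (admissible) ideal on $\mathbb{N}$, we have $A\neq\mathbb{N}$, so we may pick any $N\in\mathbb{N}\setminus A$ and use it as the required index. For this $N$, any $n$ with $|f(x_n)-f(x_N)|\geq\epsilon$ must lie in $A$ (otherwise both $n,N\notin A$, forcing $|f(x_n)-f(x_N)|<\epsilon$ by the hypothesis). Hence $\{n\in\mathbb{N}:|f(x_n)-f(x_N)|\geq\epsilon\}\subseteq A\in I$, and the hereditary property of ideals gives membership in $I$. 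I do not anticipate a genuine obstacle in this proof; the only subtlety worth flagging is the need to halve $\epsilon$ in the (i)$\Rightarrow$(iii) step and to invoke nontriviality of $I$ to guarantee that an index $N\notin A$ exists in the (iii)$\Rightarrow$(i) step.
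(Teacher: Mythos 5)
Your proof is correct and complete. The paper itself supplies no proof of this lemma (it is stated with only a reference to the $I^K$-Cauchy functions paper of Das, Sleziak and Toma), so there is nothing in the text to compare against; your argument --- the $I$/$F(I)$ duality for (i)$\Leftrightarrow$(ii), the $\epsilon/2$ triangle inequality for (i)$\Rightarrow$(iii), and choosing $N\notin A$ for (iii)$\Rightarrow$(i) --- is exactly the standard one, and you correctly flag the one point of substance, namely that $\mathbb{N}\notin I$ (the paper's standing nontriviality assumption) is needed to guarantee such an $N$ exists.
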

\begin{lemma}
A weakly $I$-convergent sequence $\{x_n\}$ is weak $I$-Cauchy.
\end{lemma}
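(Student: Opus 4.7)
The plan is to unpack both definitions and bridge them with an $\epsilon/2$ triangle-inequality argument, exactly parallel to the classical proof that a convergent sequence is Cauchy.

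First I would fix an arbitrary $\epsilon > 0$ and an arbitrary $f \in X^*$. The goal is to exhibit an integer $N = N(\epsilon, f)$ such that $\{n \in \mathbb{N} : |f(x_n) - f(x_N)| \geq \epsilon\} \in I$. Let $x \in X$ denote the weak $I$-limit of $\{x_n\}$. By Definition of weak $I$-convergence applied to $f$ and to the tolerance $\epsilon/2$, the set
\[
A = \{n \in \mathbb{N} : |f(x_n) - f(x)| \geq \epsilon/2\}
\]
belongs to $I$.

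Next I would select $N$. Because $I$ is a nontrivial ideal, $\mathbb{N} \notin I$, so $\mathbb{N} \setminus A$ is non-empty; pick any $N \in \mathbb{N} \setminus A$, so that $|f(x_N) - f(x)| < \epsilon/2$. Then for every $n \notin A$ the triangle inequality gives
\[
|f(x_n) - f(x_N)| \leq |f(x_n) - f(x)| + |f(x) - f(x_N)| < \tfrac{\epsilon}{2} + \tfrac{\epsilon}{2} = \epsilon.
\]
Equivalently, $\{n \in \mathbb{N} : |f(x_n) - f(x_N)| \geq \epsilon\} \subseteq A$, and since $A \in I$ and ideals are hereditary (Definition 2.1(ii)), this subset belongs to $I$ as well. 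As $\epsilon$ and $f$ were arbitrary, $\{x_n\}$ satisfies the definition of a weak $I$-Cauchy sequence.

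There is no substantial obstacle here; the proof is a direct translation of the classical ``convergent $\Rightarrow$ Cauchy'' argument into the language of ideals, where the only point requiring a little care is noting that $\mathbb{N}\setminus A \neq \emptyset$ so that a witness $N$ actually exists. One could equivalently phrase the last step using Lemma \ref{3.1}(ii): the set $\{n : |f(x_n) - f(x_N)| < \epsilon\}$ contains $\mathbb{N} \setminus A \in F(I)$, hence itself lies in $F(I)$.
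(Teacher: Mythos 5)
Your argument is correct and complete: the paper omits this proof as ``straightforward,'' and your $\epsilon/2$ triangle-inequality argument, with the observation that $\mathbb{N}\setminus A\neq\emptyset$ (since $I$ is nontrivial) guaranteeing a witness $N$, is exactly the standard argument the authors have in mind. Nothing further is needed.
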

\begin{proof}
The proof is straightforward and so omitted. 
\end{proof}
In this paper, our main aim is to generalize the notion of weak $I^*$-Cauchyness of sequences. The definition of weak $I^*$-Cauchy sequences as introduced in\cite{26} is as follows.
\begin{definition}(\cite{26})
A sequence $\{x_n\}_{n\in \mathbb{N}}$ in $X$ is said to be weak $I^*$-Cauchy if there exists a set $M=\{m_1<m_2<...<m_k<...\}\in F(I)$ such that the subsequence $\{f(x_{m_k})\}$ is Cauchy in the ordinary sense for each $f\in X^*$. 
\end{definition} 
\begin{definition}
Let $I,K$ be two ideals on the set $\mathbb{N}$. A sequence $\{x_n\}_{n\in \mathbb{N}}$ in $X$ is said to be weak $I^K$-Cauchy if there exists a set $M\in F(I)$ such that the sequence $\{x_n\}_{n\in M}$ is weak $K|_M$-Cauchy, where $K|_M=\{A\cap M: A\in K\}$
\end{definition} 

\begin{lemma}
Every weakly $I^K$-convergent sequence is weak $I^K$-Cauchy.\\
- The proof is directly follows from the definition and so is omitted.
\end{lemma}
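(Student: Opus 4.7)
The plan is to unwind the two definitions and chain them together. Assume $\{x_n\}$ is weakly $I^K$-convergent to some $x \in X$; then by Definition~2.8 there is a set $M \in F(I)$ such that the auxiliary sequence $\{y_n\}$, with $y_n = x_n$ for $n \in M$ and $y_n = x$ otherwise, is weakly $K$-convergent to $x$. I want to show that this same $M$ witnesses that $\{x_n\}$ is weak $I^K$-Cauchy, i.e.\ that $\{x_n\}_{n \in M}$ is weak $K|_M$-Cauchy.

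The first step is to verify the well-known fact (analogous to the one mentioned in Lemma 3.2 at the ordinary ideal level) that weak $K$-convergence implies weak $K$-Cauchy: fix $\epsilon>0$ and $f \in X^*$, and set $B_{f,\epsilon} = \{n \in \mathbb{N} : |f(y_n) - f(x)| \geq \epsilon/2\} \in K$. Pick any index $N \in M \setminus B_{f,\epsilon}$; then $|f(x_N) - f(x)| = |f(y_N) - f(x)| < \epsilon/2$. By the triangle inequality, for every $n \in M \setminus B_{f,\epsilon}$ we obtain $|f(x_n) - f(x_N)| < \epsilon$, so
\[
\{n \in M : |f(x_n) - f(x_N)| \geq \epsilon\} \subseteq M \cap B_{f,\epsilon} \in K|_M,
\]
which is exactly the weak $K|_M$-Cauchy condition for $\{x_n\}_{n \in M}$.

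The only mildly delicate point is guaranteeing that $N$ can be chosen inside $M$, since the definition of weak $I^K$-Cauchy demands the anchor index to lie in $M$. If $M \setminus B_{f,\epsilon}$ is nonempty this is immediate, which is the generic case because $\mathbb{N} \setminus B_{f,\epsilon} \in F(K)$. In the degenerate case where $M \subseteq B_{f,\epsilon}$, one has $M \in K$, and then $K|_M$ already contains every subset of $M$, so the Cauchy condition holds vacuously with any choice of $N \in M$. Since this is the main (and essentially only) obstacle, and it is dispatched by a short case-split, the proof reduces to invoking the definitions and the elementary triangle-inequality argument above—consistent with the author's remark that the proof follows directly from the definitions.
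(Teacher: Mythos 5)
Your proof is correct and is precisely the ``direct from the definitions'' argument the paper alludes to but omits: you reuse the witnessing set $M$, show the auxiliary sequence's weak $K$-convergence yields the weak $K|_M$-Cauchy condition via the triangle inequality, and correctly dispatch the only delicate point (choosing the anchor index $N$ inside $M$) by noting that if $M\subseteq B_{f,\epsilon}$ then $M\in K|_M$ and the condition holds vacuously. Nothing further is needed.
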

\begin{proposition}\label{2}
Let $X$ be a normed space. Let $I,I_1,K,K_1$ be ideals on the set $\mathbb{N}$ such that $I\subseteq I_1$ and $K\subseteq K_1$.\\
(i) If $\{x_n\}$ is weak $I^K$-Cauchy then it is also weak $I_1^K$-Cauchy.\\
(ii) If $\{x_n\}$ is weak $I^K$-Cauchy then it is also weak $I^{K_1}$-Cauchy.
\end{proposition}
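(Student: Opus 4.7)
The plan is to verify both parts by showing that the same witness set $M$ that demonstrates weak $I^K$-Cauchyness also demonstrates the weaker conclusions, relying on the elementary inclusions between filters/ideals induced by $I\subseteq I_1$ and $K\subseteq K_1$.

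For part (i), I would first observe the standard fact that $I\subseteq I_1$ implies $F(I)\subseteq F(I_1)$. Indeed, if $A\in F(I)$ then $\mathbb{N}\setminus A\in I\subseteq I_1$, so $A\in F(I_1)$. Now, since $\{x_n\}$ is weak $I^K$-Cauchy, there exists $M\in F(I)$ such that $\{x_n\}_{n\in M}$ is weak $K|_M$-Cauchy. By the inclusion just noted, this same $M$ lies in $F(I_1)$, and the Cauchy witness is unchanged since the ideal $K|_M$ is the same in both contexts. Hence $\{x_n\}$ is weak $I_1^K$-Cauchy.

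For part (ii), I would again start with the same $M\in F(I)$ guaranteed by the weak $I^K$-Cauchy hypothesis; here $M$ remains in $F(I)$ so the outer ideal poses no issue. The task is to upgrade the weak $K|_M$-Cauchy property of the restricted sequence $\{x_n\}_{n\in M}$ to a weak $K_1|_M$-Cauchy property. Since $K\subseteq K_1$, for any $A\cap M\in K|_M$ with $A\in K$, we have $A\in K_1$, hence $A\cap M\in K_1|_M$; thus $K|_M\subseteq K_1|_M$. Fix $\epsilon>0$ and $f\in X^*$. By the weak $K|_M$-Cauchy property there is an index $N\in M$ with $\{n\in M:|f(x_n)-f(x_N)|\geq\epsilon\}\in K|_M\subseteq K_1|_M$, which is exactly the weak $K_1|_M$-Cauchy condition. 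Therefore $\{x_n\}$ is weak $I^{K_1}$-Cauchy.

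The argument is essentially a monotonicity check, so there is no serious obstacle; the only subtle point is to be explicit that the restriction operation is itself monotone in the outer ideal, i.e.\ $K\subseteq K_1 \Rightarrow K|_M\subseteq K_1|_M$, and to invoke the trivial analogue of Note \ref{1} for the Cauchy condition (smaller ideal containment of the exceptional set forces the larger ideal containment), which is immediate from the characterization given in Lemma \ref{3.1}.
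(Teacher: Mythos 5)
Your proposal is correct and follows essentially the same route as the paper's proof: part (i) via $I\subseteq I_1\Rightarrow F(I)\subseteq F(I_1)$ with the same witness $M$, and part (ii) via the monotonicity $K|_M\subseteq K_1|_M$ together with the characterization of weak Cauchyness in Lemma \ref{3.1}. Your writeup is in fact slightly more careful than the paper's in spelling out why $K|_M\subseteq K_1|_M$.
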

\begin{proof}
If $\{x_n\}$ is weak $I^K$-Cauchy then there exists a subset $M\in F(I)$ such that $\{x_n\}_{n\in M}$ is weak $K|_M$-Cauchy.\\
(i) Now as $I\subseteq I_1$ so $F(I)\subseteq F(I_1)$. Then we have $M\in F(I_1)$ and so $\{x_n\}$ is also weak $I_1^K$-Cauchy.\\
(ii) Since $K\subseteq K_1$ so $K|_M\subseteq K_1|_M$. So from the lemma\ref{3.1}, we get that if $\{x_n\}_{n\in M}$ is weak $K|_M$-Cauchy then it is also weak $K_1|_M$-Cauchy. Hence $\{x_n\}$ is weak $I^{K_1}$-Cauchy
\end{proof}
\begin{lemma}\label{0}
If $I$ and $K$ are ideals on $\mathbb{N}$ and if a sequence $\{x_n\}_{n\in\mathbb{N}}$ is weak $K$-Cauchy, then it is also weak $I^K$-Cauchy.
\end{lemma}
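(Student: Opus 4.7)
\begin{pf}[plan]
The plan is to exhibit the witness set $M$ required by the definition of weak $I^K$-Cauchy directly, by taking $M = \mathbb{N}$. Since $I$ is an ideal on $\mathbb{N}$, we have $\mathbb{N}\setminus\mathbb{N} = \emptyset \in I$, so $\mathbb{N}\in F(I)$, which shows $M$ is an admissible witness.

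With this choice of $M$, the induced ideal on $M$ is $K|_M = \{A\cap \mathbb{N} : A\in K\} = K$. Therefore, saying that $\{x_n\}_{n\in M}$ is weak $K|_M$-Cauchy is literally the same as saying that $\{x_n\}_{n\in\mathbb{N}}$ is weak $K$-Cauchy, which is our hypothesis. Thus the definition of weak $I^K$-Cauchy is satisfied with $M=\mathbb{N}$.

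There is essentially no obstacle here: the statement is a formal consequence of the fact that $F(I)$ always contains the whole index set, and the restriction operation $K\mapsto K|_{\mathbb{N}}$ is the identity. If one wanted to verify the weak $K|_M$-Cauchy condition explicitly rather than appeal to the identification $K|_{\mathbb{N}}=K$, one would fix $\epsilon>0$ and $f\in X^*$, use weak $K$-Cauchyness to obtain an $N=N(\epsilon,f)\in\mathbb{N}$ with $\{n\in\mathbb{N}:|f(x_n)-f(x_N)|\geq\epsilon\}\in K$, and observe that the same set equals $\{n\in M:|f(x_n)-f(x_N)|\geq\epsilon\}\cap M \in K|_M$, completing the verification.
\end{pf}
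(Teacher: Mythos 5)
Your proposal is correct and follows essentially the same argument as the paper: take $M=\mathbb{N}\in F(I)$, observe that $K|_{\mathbb{N}}=K$, and conclude that the hypothesis of weak $K$-Cauchyness is exactly the required weak $K|_M$-Cauchy condition. The extra explicit verification with $\epsilon$ and $f\in X^*$ is a harmless unwinding of the definition and does not change the route.
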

\begin{proof}
If we take $M=\mathbb{N}$ then $M\in F(I)$, in this case $K|_M=K$, hence $\{x_n\}_{n\in M}$ is weak $K|_M$-Cauchy. This shows that $\{x_n\}$ is weak $I^K$-Cauchy.
\end{proof}
\begin{proposition}
 A sequence $\{x_n\}_{n\in \mathbb{N}}\in X$ is weak $I^I$-Cauchy if and only if it is weak $I$-Cauchy.
\end{proposition}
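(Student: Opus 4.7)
The plan is to prove the two implications separately, with one being essentially immediate and the other requiring a short unfolding of definitions.

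For the easy direction ($\Leftarrow$): if $\{x_n\}$ is weak $I$-Cauchy, then by Lemma \ref{0} applied with $K = I$, it is weak $I^I$-Cauchy. So this direction costs nothing.

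For the converse ($\Rightarrow$): assume $\{x_n\}$ is weak $I^I$-Cauchy. By definition there exists $M \in F(I)$ such that $\{x_n\}_{n\in M}$ is weak $I|_M$-Cauchy, where $I|_M = \{A \cap M : A \in I\}$. Fix $\epsilon > 0$ and $f \in X^*$. I would invoke the weak $I|_M$-Cauchy property to produce an integer $N \in M$ such that
\[
A := \{n \in M : |f(x_n) - f(x_N)| \geq \epsilon\} \in I|_M.
\]
The key observation is that $I|_M \subseteq I$: any element of $I|_M$ has the form $B \cap M$ with $B \in I$, and by the hereditary property (ii) of an ideal, $B \cap M \in I$. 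Hence $A \in I$.

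To finish, I would split the full bad set
\[
C := \{n \in \mathbb{N} : |f(x_n) - f(x_N)| \geq \epsilon\} = (C \cap M) \cup (C \setminus M),
\]
noting that $C \cap M = A \in I$ and $C \setminus M \subseteq \mathbb{N} \setminus M \in I$ since $M \in F(I)$. By the union-closure and hereditary axioms of $I$, $C \in I$, proving that $\{x_n\}$ is weak $I$-Cauchy. There is no real obstacle here; the only point that deserves care is the inclusion $I|_M \subseteq I$ and the clean extension of the bad set from $M$ to all of $\mathbb{N}$ via $\mathbb{N}\setminus M \in I$.
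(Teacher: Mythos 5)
Your proposal is correct and takes essentially the same route as the paper: the backward direction is Lemma \ref{0} with $K=I$ in both cases, and your forward direction is just the ideal-side dual of the paper's filter-side argument, which unfolds weak $I|_M$-Cauchyness into a set of the form $F\cap M$ (with $F\in F(I)$) on which all pairs are $\epsilon$-close and observes that $F\cap M\in F(I)$. Your explicit steps --- the inclusion $I|_M\subseteq I$ and the decomposition $C=(C\cap M)\cup(C\setminus M)$ with $\mathbb{N}\setminus M\in I$ --- are precisely the complementation of that single observation, so nothing is missing.
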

\begin{proof}
Since $\{x_n\}$ is weak $I^I$-Cauchy so there exists an $M\in F(I)$ such that the sequence $\{x_n\}_{n\in M}$ is weak $I|_M$-Cauchy i.e. for each $\epsilon(>0)$ there is $F\in F(I)$ such that $m,n\in F\cap M \Rightarrow |f(x_m)-f(x_n)|<\epsilon$. Since the set $F\cap M \in F(I)$, this shows that $\{x_n\}$ is weak $I$-Cauchy.\\
Converse part follows from the lemma \ref{0} by taking $K=I$.
\end{proof}
\begin{corollary}
Let $I,K$ be two ideals on $\mathbb{N}$. For a sequence $\{x_n\}$ in $X$, weak $I^K$-Cauchyness implies weak $I$-Cauchyness if $K\subset I$. 
\end{corollary}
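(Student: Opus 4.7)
The plan is to chain together two results already established in the paper. Observe that the hypothesis $K\subset I$ is exactly the condition needed to invoke part (ii) of Proposition \ref{2} with $K_1$ replaced by $I$: since $K\subseteq I$, if $\{x_n\}$ is weak $I^K$-Cauchy then it is automatically weak $I^I$-Cauchy.

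Next, I would appeal to the proposition immediately preceding this corollary, which asserts that weak $I^I$-Cauchyness is equivalent to weak $I$-Cauchyness. Combining these two steps gives the desired implication: weak $I^K$-Cauchy $\Rightarrow$ weak $I^I$-Cauchy $\Rightarrow$ weak $I$-Cauchy.

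There is essentially no obstacle here, since both ingredients have already been proved. If one wanted to avoid quoting Proposition \ref{2}(ii) and instead argue directly, one could unfold the definitions: pick $M\in F(I)$ witnessing weak $I^K$-Cauchyness of $\{x_n\}$, so that for every $\epsilon>0$ and every $f\in X^*$ there is $A\in K|_M$ with $|f(x_m)-f(x_n)|<\epsilon$ whenever $m,n\in M\setminus A$. Since $K\subseteq I$ one has $A\in I$, and since $\mathbb{N}\setminus M\in I$ one concludes that the set $A\cup(\mathbb{N}\setminus M)$ lies in $I$ and controls the Cauchy condition on its complement, which by Lemma \ref{3.1}(iii) means $\{x_n\}$ is weak $I$-Cauchy. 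Either route is short; the quickest presentation is simply to cite the two earlier results in succession.
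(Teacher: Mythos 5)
Your proposal is correct and follows exactly the paper's own argument: apply Proposition \ref{2}(ii) with $K_1=I$ to pass from weak $I^K$-Cauchy to weak $I^I$-Cauchy, then invoke the preceding proposition identifying weak $I^I$-Cauchyness with weak $I$-Cauchyness. The additional direct argument you sketch is a valid alternative but unnecessary; the two-step citation is precisely what the paper does.
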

\begin{proof}
If the sequence is $\{x_n\}$ weak $I^K$-Cauchy then it follows from the proposition \ref{2}(ii) that it is weak $I^I$-Cauchy. So by previous theorem we get that $\{x_n\}$ is weak $I$-Cauchy.
\end{proof}
For any two ideals $I,K$ on $\mathbb{N}$, the set $I\vee K=\{A\cup B: A \in I, B\in K \}$ is an ideal on $\mathbb{N}$ and this the smallest ideal containing both $I$ and $K$, The dual filter is $F(I \vee K)= F(I)\vee F(K)= \{F\cap G:F\in F(I), G\in F(K)\}$

\begin{theorem}
Let $I,K$ be ideals on a set $\mathbb{N}$. A sequence $\{x_n\}$ in $X$ is weak $I^K$-Cauchy if and only if it is weak $(I\vee K)^K$-Cauchy.
\end{theorem}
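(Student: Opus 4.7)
The plan is to split the biconditional into two implications. The forward direction is immediate from what has already been established: since $I \subseteq I \vee K$ by construction, Proposition \ref{2}(i) directly gives that weak $I^K$-Cauchy implies weak $(I\vee K)^K$-Cauchy. So nothing new needs to be done there.

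The substance of the theorem is the converse. Assume $\{x_n\}$ is weak $(I\vee K)^K$-Cauchy, so there exists $M\in F(I\vee K)$ such that $\{x_n\}_{n\in M}$ is weak $K|_M$-Cauchy. Using the description $F(I\vee K)=\{F\cap G:F\in F(I),\,G\in F(K)\}$ stated just before the theorem, I would write $M=F\cap G$ with $F\in F(I)$ and $G\in F(K)$. The strategy is to upgrade $F$ itself into a witnessing set of $F(I)$ for weak $I^K$-Cauchyness. Note $F\setminus M=F\setminus G=F\cap(\mathbb{N}\setminus G)$ and $\mathbb{N}\setminus G\in K$, so $F\setminus M\in K|_F$.

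Now fix $\epsilon>0$ and $f\in X^*$. By Lemma \ref{3.1}(iii) applied to the weak $K|_M$-Cauchy sequence $\{x_n\}_{n\in M}$, there is $A\in K|_M$ with $|f(x_m)-f(x_n)|<\epsilon$ for all $m,n\in M\setminus A$. Writing $A=B\cap M$ with $B\in K$, the set $A$ is contained in $M\subseteq F$, so $A=(B\cap G)\cap F\in K|_F$. Define $A':=A\cup(F\setminus G)$; this is a union of two sets in $K|_F$, hence in $K|_F$. For $m,n\in F\setminus A'$ we have $m,n\in F\cap G=M$ and $m,n\notin A$, so $m,n\in M\setminus A$, giving $|f(x_m)-f(x_n)|<\epsilon$. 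Applying Lemma \ref{3.1}(iii) in the reverse direction on the index set $F$ with ideal $K|_F$ shows that $\{x_n\}_{n\in F}$ is weak $K|_F$-Cauchy, and since $F\in F(I)$, this means $\{x_n\}$ is weak $I^K$-Cauchy.

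The main obstacle, though a minor one, is the careful bookkeeping with restricted ideals: one has to verify that passing from $K|_M$ to $K|_F$ is legitimate (which relies on $M\subseteq F$ and on $F\setminus M$ itself lying in $K|_F$), and that the enlarged exceptional set $A'$ still belongs to $K|_F$. Once this is handled, the equivalence is a direct consequence of the structural description of the filter $F(I\vee K)$.
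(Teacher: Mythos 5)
Your proof is correct and follows essentially the same route as the paper: both directions hinge on writing $M=F\cap G$ with $F\in F(I)$, $G\in F(K)$ and promoting the $F(I)$-component to the witnessing set, absorbing the discrepancy $F\setminus G$ into the $K$-side. The only difference is cosmetic: the paper phrases the bookkeeping with filter sets (replacing $F$ by $F'=F\cap G\in F(K)$ so that $F'\cap M'=F\cap M$), while you work dually with exceptional sets in $K|_F$ (enlarging $A$ to $A'=A\cup(F\setminus G)$); the two computations are identical.
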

\begin{proof}
Since $I\subset (I\vee K)$ so by proposition \ref{2}(i) we get that weak $I^K$-Cauchyness implies weak $(I\vee K)^K$-Cauchyness of $\{x_n\}$ .\\
 Conversely, assume that $\{x_n\}$ is weak $(I\vee K)^K$-Cauchy so there exists a set $M\in F(I\vee K)$ such that $\{x_n\}|_{n\in M}$ is weak $K|_M$-Cauchy that is for every $\epsilon (>0)$ and for every $f\in X^*$ there exists $F\in F(K)$ such that $m,n \in F\cap M \Rightarrow |f(x_n)-f(x_m)|<\epsilon$. Again it is obvious that $M=M'\cap G$ for some $M'\in F(I)$ and $G\in F(K)$. If we put $F'= F\cap G$ then $F'\in F(K)$ and $F'\cap M'=F\cap G \cap M'=F\cap M$ which gives $n,m\in F'\cap M'\Rightarrow |f(x_n)-f(x_m)|<\epsilon$. So $\{x_n\}$ is weak $I^K$-Cauchy.
\end{proof}
\begin{theorem}
Let $X$ be a normed space and $I, K$ be ideals on $\mathbb{N}$ then a weak $I$-Cauchy sequence $\{x_n\}$ in $X$ is weak $I^K$-Cauchy if and only if AP$(I,K)$ condition holds.
\end{theorem}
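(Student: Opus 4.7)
I would prove the two implications separately, extracting the key data from Lemma \ref{3.1}(iii) (``tail'' reformulation of weak $I$-Cauchyness) in both directions.

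\emph{Sufficiency ($\mathrm{AP}(I,K)\Rightarrow$ weak $I^K$-Cauchyness).} Let $\{x_n\}$ be weak $I$-Cauchy and fix $f\in X^{*}$. By Lemma \ref{3.1}(iii), for each $k\in\mathbb{N}$ I can choose $A_{f,k}\in I$ with $|f(x_m)-f(x_n)|<1/k$ whenever $m,n\notin A_{f,k}$. Feeding the countable family $\{A_{f,k}\}_{k\in\mathbb{N}}$ into condition (i) of Definition \ref{2} (i.e.\ $\mathrm{AP}(I,K)$), I obtain $A\in I$ with $A_{f,k}\setminus A\in K$ for every $k$. Set $M:=\mathbb{N}\setminus A\in F(I)$. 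Given $\epsilon>0$, pick $k$ with $1/k<\epsilon$ and put $B:=A_{f,k}\setminus A$; then $B\subseteq M$ and $B\in K$, so $B\in K|_{M}$, while any $m,n\in M\setminus B$ satisfy $m,n\notin A_{f,k}$ and hence $|f(x_m)-f(x_n)|<\epsilon$. Invoking Lemma \ref{3.1}(iii) on $M$ therefore gives weak $K|_{M}$-Cauchyness of $\{x_n\}_{n\in M}$, so $\{x_n\}$ is weak $I^{K}$-Cauchy.

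\emph{Necessity.} Assume every weak $I$-Cauchy sequence in $X$ is weak $I^{K}$-Cauchy, and let $\{A_n\}$ be an arbitrary pairwise disjoint family in $I$; by the reformulation in Definition \ref{2}(iv) it suffices to produce $\{B_n\}\subset I$ with $A_n\bigtriangleup B_n\in K$ and $\bigcup_{n}B_n\in I$. I take $X=\mathbb{R}$ and define the test sequence $x_n:=1/k$ if $n\in A_k$, and $x_n:=0$ otherwise. Because $\{n:|x_n|\geq 1/k\}\subseteq A_1\cup\cdots\cup A_k\in I$, the sequence is $I$-convergent to $0$, hence weak $I$-Cauchy, so by hypothesis weak $I^{K}$-Cauchy; this produces $M\in F(I)$ with $\{x_n\}_{n\in M}$ a $K|_{M}$-Cauchy sequence in $\mathbb{R}$ (weak and norm coincide). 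Put $B_n:=A_n\setminus M$; then $\bigcup_{n}B_n\subseteq\mathbb{N}\setminus M\in I$ and $A_n\bigtriangleup B_n=A_n\cap M$, so it only remains to show $A_n\cap M\in K$ for every $n$.

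For that last step, completeness of $\mathbb{R}$ upgrades $K|_{M}$-Cauchyness to $K|_{M}$-convergence of $\{x_n\}_{n\in M}$ to some $L\in\mathbb{R}$. If $L\neq 0$, then $\{n\in M:|x_n-L|\geq |L|/2\}\in K|_{M}\subseteq K$, so $\{n\in M:|x_n|>|L|/2\}$ would be cofinite in $M$ modulo $K$; but this set also lies in $I$ because $\{x_n\}$ is $I$-convergent to $0$, which, combined with $\mathbb{N}\setminus M\in I$, forces $\mathbb{N}\in I\vee K$, contradicting properness of $I\vee K$. Hence $L=0$, and applying the definition of $K|_{M}$-convergence with $\epsilon=1/(n+1)$ yields $\bigcup_{k\leq n+1}(A_k\cap M)\in K|_{M}$, so $A_n\cap M\in K$ by heredity of $K$, completing $\mathrm{AP}(I,K)$.

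\emph{Main obstacle.} The forward direction is a clean bookkeeping application of $\mathrm{AP}(I,K)$. The delicate point is the converse: converting ``$K|_{M}$-Cauchy in $\mathbb{R}$'' into concrete membership $A_n\cap M\in K$. This is where one needs completeness of the real line to pass from Cauchyness to convergence and the standard (tacit) hypothesis that $I\vee K$ is proper to rule out a spurious nonzero quasi-limit $L$.
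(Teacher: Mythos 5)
Your sufficiency argument is essentially the paper's: the paper disjointifies its sets $G_r$ and invokes AP$(I,K)$ in the ``pairwise disjoint'' form, while you feed the sets $A_{f,k}$ directly into form (i) of the AP definition; these are interchangeable, and the bookkeeping with $M=\mathbb{N}\setminus A$ and $B=A_{f,k}\setminus A$ is the same computation. Both versions also share the same unaddressed point: the sets you extract depend on the fixed functional $f$, so the resulting $M$ is a priori $f$-dependent, whereas the definition of weak $I^K$-Cauchy requires a single $M\in F(I)$ working simultaneously for all $f\in X^*$.

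The necessity direction is where you genuinely diverge, and where there are two concrete gaps. First, the hypothesis is that every weak $I$-Cauchy sequence \emph{in the given space $X$} is weak $I^K$-Cauchy, so you cannot simply ``take $X=\mathbb{R}$''; you must realize the test sequence inside $X$ (e.g.\ $x_n=t_n v$ for a fixed $v\neq 0$ with $t_n=1/k$ on $A_k$, using some $f$ with $f(v)\neq 0$ to transfer the scalar argument back) --- this is exactly the role of the paper's sequence of distinct points $y_j\to x_0$. Second, your plan --- upgrade $K|_M$-Cauchyness to $K|_M$-convergence by completeness, pin the limit at $0$, and conclude $A_n\cap M\in K$ for \emph{every} $n$ --- aims at a stronger conclusion than is needed, and precisely for that reason requires the unstated hypothesis that $I\vee K$ is proper (to exclude a nonzero limit). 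The paper avoids this entirely: it only rules out the existence of \emph{two or more} indices $j$ with $A_j\cap M\notin K$, by separating $y_{l_1}$ from $y_{l_2}$ with a functional and violating the Cauchy oscillation bound on every $E\in F(K|_M)$, and when exactly one bad index $j_0$ survives it simply redefines $B_{j_0}:=A_{j_0}$, so that $A_{j_0}\bigtriangleup B_{j_0}=\emptyset\in K$. Your version is repairable --- if $I\vee K$ is improper, say $\mathbb{N}=A_0\cup B_0$ with $A_0\in I$ and $B_0\in K$, then $B_n:=A_n\cap A_0$ already witnesses AP$(I,K)$, so the proper case is the only one that matters --- but as written the properness of $I\vee K$ is invoked without justification, and the reduction to $\mathbb{R}$ is not licensed by the stated hypothesis.
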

First suppose that AP$(I,K)$ condition holds. Since $\{x_n\}$ is weak $I$-Cauchy, so for every $r\in \mathbb{N}$, we can find a set $G_r\in I$ such that $s,t\notin G_r$ implies $|f(x_s)-f(x_t)|<\frac{1}{r}$. Let $A_1=G_1$, $A_2=G_2\setminus G_1$, $A_3=G_3 \setminus(G_1\cup G_2)...$ etc. Then $\{A_i:i\in \mathbb{N}\}$ is a countable family of mutual disjoint sets in $I$. Since AP($I,K$) condition holds, there exists a family of sets $\{B_i:i\in \mathbb{N}\}\in I$ such that $B=\displaystyle{\cup_{i\in \mathbb{N}}B_i\in I}$ and for every $j\in \mathbb{N}$ we have $A_j\bigtriangleup B_j\in K$ i.e. $A_j\bigtriangleup B_j=\mathbb{N}\setminus C_j$ for some $C_j\in F(K)$. Let $M=\mathbb{N}\setminus B$ and $\epsilon>0$ be given. Choose $r\in \mathbb{N}$ such that $\frac{1}{r}<\epsilon$. Now, $G_r\cap M= G_r\setminus B\subseteq \displaystyle{\cup_{i=1}^r}(A_i\setminus B_i)\subseteq \displaystyle{\cup_{i=1}^r}(\mathbb{N}\setminus C_i)=\mathbb{N}\setminus C$, where $C=\displaystyle{\cap_{i=1}^r}C_i\in F(K)$. Then $G_r^c\cap M \cap C\supseteq M\cap C$(for otherwise there is an $s\in M\cap C$ but $s\notin G_r^c$ that is $B\in G_r$ and so $s\in G_r\cap M \cap C\subseteq \mathbb{N}\setminus C$ which is a contradiction.) This shows that $s,t\in C\cap M$ implies $s,t\in G_r^c$ which implies $|f(x_s)-f(x_t)|<\frac{1}{r}<\epsilon$. But $C\cap M\in F(K|_M)$ i.e. $\{f(x_n)\}_{n\in M}$ is $K|_M$-Cauchy i.e. $\{x_n\}_{n\in M}$ is weak $K|_M$-Cauchy. Hence the sequence $\{x_n\}_{n\in M}$ is weak $I^K$-Cauchy.
\begin{theorem}
Let $X$ be a normed space and $I, K$ be ideals on $\mathbb{N}$. Suppose that every weak $I$-Cauchy sequence is weak $I^K$-Cauchy. Then the condition AP($I,K$) holds.
\end{theorem}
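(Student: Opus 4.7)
The plan is to start from an arbitrary pairwise disjoint family $\{A_i\}_{i\in\mathbb N}\subseteq I$ and construct from it a sequence in $X$ that is weakly $I$-Cauchy, so that the hypothesis furnishes a set $M\in F(I)$ from which witnesses $\{B_i\}$ for AP$(I,K)$ (in its disjoint-family reformulation) can be read off. Assuming $X\ne\{0\}$, I fix a unit vector $x_0\in X$ and, via Hahn--Banach, a functional $f_0\in X^*$ with $f_0(x_0)=1$, and set
\[
x_n=\begin{cases}\dfrac{1}{i}\,x_0 & \text{if }n\in A_i,\\[4pt] 0 & \text{if }n\notin\bigcup_{i\ge 1}A_i.\end{cases}
\]
For every $f\in X^*$ and $\epsilon>0$ the set $\{n:|f(x_n)|\ge\epsilon\}$ is contained in the finite union $\bigcup_{i\le|f(x_0)|/\epsilon}A_i\in I$, so $\{x_n\}$ is weakly $I$-convergent to $0$, in particular weak $I$-Cauchy. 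The hypothesis then yields $M\in F(I)$ such that $\{x_n\}_{n\in M}$ is weak $K|_M$-Cauchy.

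The natural candidate is $B_i:=A_i\setminus M$. Since $B_i\subseteq\mathbb N\setminus M\in I$, each $B_i$ and $\bigcup_i B_i$ lie in $I$, and the remaining task reduces to verifying $A_i\cap M\in K$ for every $i$ (this being $A_i\triangle B_i$). Here I plan to apply Lemma \ref{3.1}(iii) to $f_0$: for each fixed $i$, the choice $\epsilon_i<\frac{1}{i(i+1)}$ produces $C_{\epsilon_i}\in K|_M$ on whose complement in $M$ the values $f_0(x_n)\in\{0\}\cup\{1/j:j\ge 1\}$ vary by less than $\epsilon_i$. Since no element of $\{0\}\cup\{1/j:j\ge 1\}$ other than $1/i$ itself lies within $\epsilon_i$ of $1/i$, this splits into two alternatives: either (B) $A_i\cap(M\setminus C_{\epsilon_i})=\emptyset$, and then $A_i\cap M\subseteq C_{\epsilon_i}\in K$; or (A) $M\setminus C_{\epsilon_i}\subseteq A_i$, and then $M\setminus A_i\in K$.

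The hard part will be disposing of alternative (A), which does not by itself deliver $A_i\cap M\in K$. My plan is to show that the very occurrence of (A) forces AP$(I,K)$ to hold for trivial reasons. Indeed, if (A) holds for some $i_0$ then $F:=M\setminus A_{i_0}\in K$, whereas $\mathbb N\setminus F=(\mathbb N\setminus M)\cup A_{i_0}\in I$, so $F\in F(I)\cap K$. Given any such $F$, for the given disjoint family $\{A_i\}\subseteq I$ the assignment $B_i':=A_i\setminus F$ places all these sets and their union inside $\mathbb N\setminus F\in I$, while $A_i\triangle B_i'=A_i\cap F\subseteq F\in K$; so AP$(I,K)$ holds. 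If (A) never occurs then (B) applies for every $i$, and the original $B_i=A_i\setminus M$ already witness the property. In either case AP$(I,K)$ is verified.
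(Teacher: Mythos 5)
Your proposal is correct. It follows the same overall strategy as the paper's proof: take an arbitrary pairwise disjoint family $\{A_i\}\subseteq I$, build a sequence that takes a distinct constant value on each $A_i$ and the limit value elsewhere, check that it is weak $I$-Cauchy, extract $M\in F(I)$ from the hypothesis, and propose $B_i=A_i\setminus M$ as the witnesses, so that everything reduces to showing $A_i\triangle B_i=A_i\cap M\in K$. Where you genuinely diverge is in how the exceptional indices are handled. The paper uses an abstract sequence of distinct points $y_i\to x_0$ and argues by contradiction that at most one index $j$ can have $A_j\cap M\notin K$: if two bad indices $l_1,l_2$ existed, every set of $F(K|_M)$ would meet both $A_{l_1}\cap M$ and $A_{l_2}\cap M$, producing points $s,t$ with $|f(x_s)-f(x_t)|=|f(y_{l_1})-f(y_{l_2})|$ bounded away from zero, contradicting weak $K|_M$-Cauchyness; the single possible exception is then patched by redefining $B_{j_0}=A_{j_0}$. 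You instead make the concrete choice $y_i=\tfrac{1}{i}x_0$ with a fixed norming functional $f_0$, and exploit the quantitative gap $\tfrac1i-\tfrac1{i+1}=\tfrac{1}{i(i+1)}$ to extract from Lemma \ref{3.1}(iii) a clean per-index dichotomy: either $A_i\cap M\in K$, or $M\setminus A_i\in K$. Your key additional observation is that the second alternative yields a set $F=M\setminus A_{i_0}\in F(I)\cap K$, and the existence of any such set makes AP$(I,K)$ hold trivially for every disjoint family via $B_i=A_i\setminus F$; this replaces both the two-bad-indices contradiction and the patching step, and is arguably cleaner. One small point in your favour: you explicitly flag the (necessary) assumption $X\neq\{0\}$, which the paper leaves implicit in its requirement that $x_0$ not be isolated and that a sequence of distinct points converge to it.
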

\begin{proof}
Let $x_0\in X$ be a point such that $x_0$ is not an isolated point and let $\{y_n\}_{n\in \mathbb{N}}$ be a sequence of distinct points in $X$ such that $\{y_n\}$ convergent to $x_0$. Let $\{A_i:i\in \mathbb{N}\}$ be a sequence of mutually disjoint non-empty sets from $I$. We define a sequence $\{x_n\}$ by
\[x_n=\left\{\begin {array}{ll}
        y_j & \mbox{if $n\in A_j$} \\
		x_0 & \mbox{if $n\notin A_j$ for any $j$}
		\end{array}
		\right. \]
Let $f\in X^*$ be arbitrary and $\epsilon(>0)$ be given. Choose $k\in \mathbb{N}$ such that $|f(y_n)-f(x_0)|<\frac{\epsilon}{2}$ for each $n\geq k$. Now the set $D=\{n\in \mathbb{N}:|f(y_n)-f(x_0)|\geq \epsilon\}$ has the property that $D\subseteq A_1\cup A_2\cup...\cup A_k$ and so $D\in I$ and such that $s,t\notin D$ implies $|f(x_s)-f(x_t)|\leq |f(x_s)-f(x_0)|+|f(x_t)-f(x_0)|<\epsilon$. This shows that $f(x_n)$ is $I$-Cauchy i.e. $\{x_n\}$ is weak $I$-Cauchy. Then by our assumption $\{x_n\}$ is weak $I^K$-Cauchy. Hence there is a set $M\in F(I)$ such that $\{x_n\}_{n\in M}$ is weak $K|_M$-Cauchy. Let $B_j=A_j\setminus M$. Then $B_j\in I$ for each $j$ and $\displaystyle{\cup_j B_j}\subseteq \mathbb{N}\setminus M\in I$, clearly $A_j\bigtriangleup B_j=A_j\setminus B_j=A_j\cap M$ for every $j$. There may arrise two cases.\\
Case-1: If $A_j\cap M\in K$ for all $j\in \mathbb{N}$ then AP$(I,K)$ holds. If $A_j\cap M\notin K$ for at most one $j$, say $j_0$, then redefining $B_{j_0}=A_{J_0}$, $B_j=A_j\setminus M$ when $j\neq j_0$, we again observe that the condition AP($I,K$) holds.\\
Case-2: Finally if possible suppose that $A_j\cap M \notin K$ for at least two $j's$. Let $l_1$ and $l_2$ be two such indices. We shall show that this is not possible. We have $C_1=A_{l_1}\cap M\notin K$ and $C_2=A_{l_2}\cap M \notin K$ where $l_1\neq l_2$. Now any $E\in F(K|_M)$ is of the form $E=C\cap M$ where $C\in F(K)$. Now $C\cap C_1\neq \phi$ ( otherwise we will have $C_1\subseteq \mathbb{N}\setminus C\in K$, which is not the case. By similar reasoning $C\cap C_2\neq \phi$.) Then there is an $s\in E$ such that $s\in A_{l_1}$ i.e. $x_s=y_{l_1}$ and a $t\in E$ such that $t\in A_{l_2}$ i.e. $x_t=y_{l_2}$. Now choosing $0<\epsilon_0<\frac{|f(y_{l_1})-f(y_{l_2})|}{3}$ we observe that for every $E\in F(K|_M)$ there exist points $s,t\in E$ such that $|f(x_s)-f(x_t)|>\epsilon_0$ or, in other words, for this $\epsilon_0$, there does not exists any $D\in K|_M$ such that $s,t\notin D$ implies $|f(x_s)-f(x_t)|<\epsilon_0$. But this contradicts the fact that $\{x_n\}_{n\in M}$ is weak $I^K$-Cauchy. Hence case-2 can not arise.
\end{proof}
\section{Weak* $I$-Cauchy}
\begin{definition}
A sequence $\{f_n\}$ in $X^*$ is said to be weak* $I$-Cauchy sequence if for each $\epsilon(>0)$ and $x\in X$ there exists a positive integer $N$ such that $\{n\in \mathbb{N}:|f_n(x)-f_N(x)|\geq \epsilon\}\in I$.
\end{definition}
\begin{definition}
A sequence $\{f_n\}$ in $X^*$ is said to be weak* $I^*$-Cauchy sequence if there exists a set $M=\{m_1<m_2<m_3<...<m_k<...\}\in F(I)$ such that $\{f_{m_k}(x)\}$ is Cauchy in ordinary sense for every $x\in X$.
\end{definition}
\begin{theorem}
Let $X$ be a normed space with dual $X^*$ and $\{f_n\}$ be a sequence in $X^*$. If $\{f_n\}$ is weak* $I^*$-Cauchy then it is weak* $I$-Cauchy.
\end{theorem}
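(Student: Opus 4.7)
The plan is to translate the weak* $I^*$-Cauchy hypothesis directly into a bound on the exceptional set appearing in the weak* $I$-Cauchy definition, using the fact that the complement of $M$ automatically lies in $I$. So I would start by unpacking the hypothesis: there exists $M=\{m_1<m_2<\cdots\}\in F(I)$ such that for every $x\in X$, the numerical sequence $\{f_{m_k}(x)\}_k$ is Cauchy in the ordinary sense. In particular, $\mathbb{N}\setminus M\in I$.

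Next, fix an arbitrary $\epsilon>0$ and $x\in X$. By the ordinary Cauchy property of $\{f_{m_k}(x)\}_k$, choose $k_0\in\mathbb{N}$ such that $|f_{m_k}(x)-f_{m_{k_0}}(x)|<\epsilon$ whenever $k\ge k_0$. Then I set $N:=m_{k_0}$, which is the candidate integer required in the weak* $I$-Cauchy definition.

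The remaining step is to check that the set $A(\epsilon,x):=\{n\in\mathbb{N}:|f_n(x)-f_N(x)|\ge\epsilon\}$ belongs to $I$. I would split
\[
A(\epsilon,x)=\bigl(A(\epsilon,x)\cap M\bigr)\cup\bigl(A(\epsilon,x)\cap(\mathbb{N}\setminus M)\bigr).
\]
The second piece is a subset of $\mathbb{N}\setminus M\in I$, so it lies in $I$ by hereditariness. For the first piece, any $n\in A(\epsilon,x)\cap M$ equals some $m_k$, and the choice of $k_0$ forces $k<k_0$; thus $A(\epsilon,x)\cap M\subseteq\{m_1,\ldots,m_{k_0-1}\}$, which is a finite set and therefore a member of $I$ since $I$ is admissible. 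Hence $A(\epsilon,x)\in I$ as the union of two members of $I$, which is exactly the weak* $I$-Cauchy condition at $\epsilon$ and $x$.

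The argument is almost entirely formal, so there is no serious obstacle; the only subtlety worth flagging is the tacit use of admissibility of $I$ (to absorb the finitely many initial indices $m_1,\ldots,m_{k_0-1}$ into $I$). Under the paper's standing conventions this is automatic, but I would state it explicitly rather than leave it implicit.
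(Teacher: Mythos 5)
Your proposal is correct and follows essentially the same route as the paper's own proof: extract $M\in F(I)$, use the ordinary Cauchy property of $\{f_{m_k}(x)\}$ to pick $N$ among the $m_k$, and observe that the exceptional set is contained in $(\mathbb{N}\setminus M)$ together with finitely many initial indices $m_1,\dots,m_{k_0}$, hence lies in $I$. The only differences are cosmetic (the paper takes $N=m_{k_0+1}$ and writes the inclusion in one line), and your explicit flagging of the admissibility of $I$ is a point the paper also uses tacitly.
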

\begin{proof}
Let $\{f_n\}$ be a weak* $I^*$-Cauchy sequence in $X^*$. Now from the definition we have there exists a set $M=\{m_1<m_2<m_3<...<m_k<...\}\in F(I)$ such that $\{f_{m_k}(x)\}$ is Cauchy in ordinary sense for every $x\in X$, that is, for every $\epsilon(>0)$ there exists a positive integer $k_0$ such that $|f_{m_k}(x)-f_{m_p}(x)|<\epsilon$  for all $k,p>k_0$. Let $N=N(\epsilon,x)=m_{k_0+1}$ then we have $|f_{m_k}(x)-f_N(x)|<\epsilon$ when $k>k_0$. Now $M_0=\mathbb{N}\setminus M\in I$ and $A(\epsilon,x)=\{n\in \mathbb{N}:|f_n(x)-f_N(x)|\geq \epsilon\}\subset M_0\cup \{m_1,m_2,...,m_{k_0}\}\in I$. Hence the result follows.
\end{proof}
\begin{theorem}\label{d}
Let $X$ be a norm space with dual $X^*$. If a sequence $\{f_k\}$ in $X^*$ is weak $I$-Cauchy then it is weak* $I$-Cauchy.
\end{theorem}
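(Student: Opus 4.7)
The plan is to exploit the canonical embedding $J\colon X\to X^{**}$ defined by $J(x)(f)=f(x)$, which realizes every point of $X$ as a continuous linear functional on $X^{*}$. Since the weak $I$-Cauchy condition for $\{f_k\}\subset X^{*}$ quantifies over all elements of $(X^{*})^{*}=X^{**}$, specializing to functionals of the form $J(x)$ with $x\in X$ will immediately yield the weak* $I$-Cauchy condition.

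More concretely, I would proceed as follows. Fix $x\in X$ and $\epsilon>0$; I want to produce an integer $N=N(\epsilon,x)$ such that
\[
\{k\in\mathbb{N}:|f_k(x)-f_N(x)|\geq\epsilon\}\in I.
\]
For this, consider the evaluation functional $\Phi_x\in X^{**}$ given by $\Phi_x(g)=g(x)$ for $g\in X^{*}$; the fact that $\Phi_x$ is bounded and linear is a standard consequence of $|g(x)|\leq\|g\|\,\|x\|$. Because $\{f_k\}$ is weak $I$-Cauchy in $X^{*}$, applied to $\Phi_x\in(X^{*})^{*}$ there exists $N=N(\epsilon,\Phi_x)\in\mathbb{N}$ such that
\[
\{k\in\mathbb{N}:|\Phi_x(f_k)-\Phi_x(f_N)|\geq\epsilon\}\in I.
\]
Since $\Phi_x(f_k)=f_k(x)$ and $\Phi_x(f_N)=f_N(x)$, the displayed set coincides with $\{k:|f_k(x)-f_N(x)|\geq\epsilon\}$, establishing the weak* $I$-Cauchy condition with the choice $N(\epsilon,x):=N(\epsilon,\Phi_x)$.

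There is no substantive obstacle: the proof is essentially a bookkeeping argument that observes $X$ sits inside $X^{**}$ via the canonical embedding, so the class of test functionals in the weak $I$-Cauchy definition includes all point evaluations needed for the weak* $I$-Cauchy definition. The only point to be careful about is that the index $N$ in the weak $I$-Cauchy definition is allowed to depend on the chosen functional, so that re-indexing by $x$ (via $\Phi_x$) produces an $N$ depending on $\epsilon$ and $x$, exactly as required by the weak* formulation.
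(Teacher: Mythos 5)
Your proposal is correct and follows exactly the paper's argument: the paper also applies the weak $I$-Cauchy condition to the image $g_x=C(x)$ of $x$ under the canonical embedding $C\colon X\to X^{**}$ and uses $g_x(f_n)=f_n(x)$ to translate the resulting $I$-small set into the weak* $I$-Cauchy condition. Your additional remark about the dependence of $N$ on the test functional is a correct and harmless clarification.
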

\begin{proof}
Let $\{f_k\}$ be a weak $I$-Cauchy in $X^*$ .Then for each $\epsilon(>0)$ and $g\in X^{**}$ there exists a positive integer $N$ such that $\{n\in \mathbb {N}: |g(f_n)-g(f_N)|\geq \epsilon\}\in I \rightarrow (1)$. Let $x\in X$ and $g_x=C(x)$ where $C:X\rightarrow X^{**}$ is the cannonical mapping. So we have, $g_x(f_n)=f_n(x)$ and $g_x(f_N)=f_N(x)$  and therefore , by (1), $\{n\in \mathbb {N}:|g_x(f_n)-g_x(f_N)|\geq \epsilon\}\in I$. Hence $\{f_k\}$ is weak* $I$-Cauchy.
\end{proof}
\begin{theorem}\label{dd}
Let $X$ be a reflexive Banach space. If a sequence $\{f_k\}$ in $X^*$ is weak* $I$-Cauchy then it is weak $I$-Cauchy.
\end{theorem}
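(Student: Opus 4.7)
The plan is essentially to reverse the argument of Theorem \ref{d} by invoking reflexivity so that every functional on $X^*$ is realized by evaluation at a point of $X$. Recall that the weak $I$-Cauchy condition for $\{f_k\}\subset X^*$ requires, for each $\epsilon>0$ and each $g\in X^{**}$, a positive integer $N$ with $\{n\in\mathbb{N}:|g(f_n)-g(f_N)|\geq \epsilon\}\in I$. So the task is to produce such an $N$ starting only from the weak* $I$-Cauchy hypothesis.

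First I would fix arbitrary $\epsilon>0$ and $g\in X^{**}$. Let $C:X\to X^{**}$ denote the canonical embedding, defined by $C(x)(f)=f(x)$. Because $X$ is reflexive, $C$ is surjective, so there exists $x\in X$ such that $g=C(x)$. Consequently, for every $k\in\mathbb{N}$ we have $g(f_k)=C(x)(f_k)=f_k(x)$.

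Next I would apply the weak* $I$-Cauchy hypothesis to this particular $x\in X$ and the given $\epsilon>0$: there exists a positive integer $N=N(\epsilon,x)$ such that $\{n\in\mathbb{N}:|f_n(x)-f_N(x)|\geq \epsilon\}\in I$. Using the identification $g(f_k)=f_k(x)$, this set coincides with $\{n\in\mathbb{N}:|g(f_n)-g(f_N)|\geq \epsilon\}$, which is therefore in $I$. Since $g\in X^{**}$ and $\epsilon>0$ were arbitrary, $\{f_k\}$ is weak $I$-Cauchy.

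There is no substantial obstacle here: the argument is simply the dual of Theorem \ref{d}, and reflexivity is used exactly once, at the point of producing the preimage $x$ of $g$ under the canonical embedding. The only thing to be careful about is to choose $N$ \emph{after} fixing both $\epsilon$ and the representing $x$, so that the $N$ obtained from the weak* $I$-Cauchy condition is the one that witnesses the weak $I$-Cauchy condition for $g$.
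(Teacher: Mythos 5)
Your proposal is correct and follows essentially the same route as the paper's own proof: fix $g\in X^{**}$, use reflexivity to write $g=C(x)$, and transfer the weak* $I$-Cauchy set for that $x$ and $\epsilon$ directly to the weak $I$-Cauchy set for $g$ via $g(f_k)=f_k(x)$. Your explicit remark about choosing $N$ only after both $\epsilon$ and the representing $x$ are fixed is a small clarification of the quantifier order that the paper leaves implicit.
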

\begin{proof}
Let $X$ be a reflexive space. Since $\{f_k\}$ is weak* $I$-Cauchy then for each $x\in X$ and $\epsilon(>0)$ there exists positive integer $N$ such that  the set $\{n\in \mathbb {N}:|f_n(x)-f_N(x)|\geq \epsilon\}\in I$. Let $g\in X^{**}$. Since $X$ is reflexive, $g=C(x)$ for some $x\in X$ where $C:X\rightarrow X^{**}$ is a cannonical mapping. Since $g(f_n)=f_n(x)$ and $g(f_N)=f_N(x)$ we have $\{n\in \mathbb{N}:|g(f_n)-g(f_N)|\geq \epsilon\}\in I$ for each $\epsilon>0$. Since the result is true for every $g\in X^{**}$, $\{f_k\}$ is weak $I$-Cauchy.
\end{proof}
\begin{definition}
Let $I,K$ be two ideals on the set $\mathbb {N}$. A sequence $\{f_n\}$ in $X^{*}$ is said to be weak* $I^K$-Cauchy if there exists a set $M\in F(I)$ such that the sequence $\{f_n\}|_M$ is weak* $K|_M$-Cauchy where $K|_M=\{A\cap M: A\in K\}$
\end{definition}
\begin{theorem}
Let $X$ be a normed space. If a sequence $\{f_k\}$ in $X^*$ is weak $I^K$-Cauchy then it is weak* $I^K$-Cauchy.
\end{theorem}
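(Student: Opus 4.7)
The plan is to mirror the proof of Theorem \ref{d} (where weak $I$-Cauchy $\Rightarrow$ weak* $I$-Cauchy) but carried out on the restricted index set $M$. The reduction is purely via the canonical embedding $C : X \to X^{**}$; no new ideas are needed beyond being careful that the same witnessing set $M \in F(I)$ works for both notions.

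First I would unpack the hypothesis: since $\{f_k\}$ is weak $I^K$-Cauchy, there exists $M \in F(I)$ such that the restricted sequence $\{f_n\}_{n \in M}$ is weak $K|_M$-Cauchy. This means that for every $g \in X^{**}$ and every $\epsilon > 0$ there is a positive integer $N$ with
\[
\{n \in M : |g(f_n) - g(f_N)| \geq \epsilon\} \in K|_M.
\]
I will keep this very $M$ as the candidate witness that $\{f_k\}$ is weak* $I^K$-Cauchy, so there is nothing to change at the $F(I)$-level.

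Next I would verify that $\{f_n\}_{n \in M}$ is weak* $K|_M$-Cauchy. Fix $x \in X$ and $\epsilon > 0$; let $g_x = C(x) \in X^{**}$, where $C$ is the canonical isometric embedding, so that $g_x(f_n) = f_n(x)$ for every $n$. Applying the weak $K|_M$-Cauchy condition above with the particular functional $g_x$, I obtain a positive integer $N$ (depending on $\epsilon$ and $x$) such that
\[
\{n \in M : |f_n(x) - f_N(x)| \geq \epsilon\} = \{n \in M : |g_x(f_n) - g_x(f_N)| \geq \epsilon\} \in K|_M.
\]
Since $x \in X$ was arbitrary, this is exactly the definition of $\{f_n\}_{n \in M}$ being weak* $K|_M$-Cauchy, and therefore $\{f_k\}$ is weak* $I^K$-Cauchy.

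There is no real obstacle here; the only thing worth being cautious about is that the localization $K|_M$ is applied identically on both sides (weak $K|_M$-Cauchy and weak* $K|_M$-Cauchy), so the same $M$ serves as witness and the restricted set membership transfers directly once $g_x$ replaces an arbitrary $g \in X^{**}$.
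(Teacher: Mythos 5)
Your proof is correct and is essentially the paper's argument: the paper simply states that the result "follows directly from the definition and Theorem \ref{d}," and what you have written is exactly that reduction, applying the canonical-embedding argument of Theorem \ref{d} to the restricted sequence $\{f_n\}_{n\in M}$ with the ideal $K|_M$ in place of $I$. Nothing is missing.
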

Proof follows directly from the definition and Theorem \ref{d}.
\begin{note}
It is easy to observe that using definition of weak* $I^K$-convergence and theorem \ref{dd}, converse of the above theorem also holds when $X$ is a reflexive Banach space.
\end{note}
\section{Weak $I^K$-divergence}
\begin{definition}\cite{15}
A sequence $\{x_n\}_{n\in \mathbb{N}}$ in a normed space $X$ is said to be $I$-divergent if there exists an element $x\in X$ such that for any positive real number $G$,\\
$A(x,G)=\{n\in \mathbb{N}:|x_n-x|\leq G\}\in I$.
\end{definition}
\begin{definition}\cite{15}
A sequence $\{x_n\}_{n\in \mathbb{N}}$ in a normed space $X$ is said to be weak $I$-divergent if for each $f\in X^*$, $\{f(x_n)\}_{n\in \mathbb{N}}$ is $I$-divergent i.e. there exists an element $x\in X$ such that for any positive real number $G$,\\
$A(f(x),G)=\{n\in \mathbb{N}:|f(x_n)-f(x)|\leq G\}\in I$.
\end{definition}
\begin{definition}\cite{15}
A sequence $\{x_n\}_{n\in \mathbb{N}}$ in a normed linear space $X$ is said to be $I^*$-divergent if there exists a set $M\in F(I)$ such that $\{x_n\}_{n\in M}$ is properly divergent i.e. there exists an $x\in X$ such that $|x_n-x|\rightarrow 0$ as $n\rightarrow \infty$.
\end{definition}
\begin{definition}
A sequence $\{x_n\}_{n\in \mathbb{N}}$ in a normed space $X$ is said to be weak $I^*$-divergent if for each $f\in X^*$ there exists a set $M\in F(I)$ $\{f(x_n)\}_{n\in M}$ is properly divergent.
\end{definition}
\begin{definition}
Let $K$ and $I$ be two ideals on $\mathbb{N}$. A sequence $\{x_n\}_{n\in \mathbb{N}}$ in a normed space $X$ is said to be $I^K$-divergent if there exists a set $M\in F(I)$ such that $\{x_n\}_{n\in M}$ is $K|_M$-divergent. 
\end{definition}
\begin{definition}
Let $K$ and $I$ be two ideals on $\mathbb{N}$. A sequence $\{x_n\}_{n\in \mathbb{N}}$ in a normed linear space $X$ is said to be weak $I^K$-divergent if for each $f\in X^*$ there exists a set $M\in F(I)$ such that $\{f(x_n)\}_{n\in \mathbb{M}}$ is $K|_M$-divergent.
\end{definition}
\begin{lemma}
If a sequence $\{x_n\}$ is weak $K$-divergent then it is also weak $I^K$-divergent.
\end{lemma}
\begin{proof}
If we consider $M=\mathbb{N}\in F(I)$ and $K|_M=K$ then the proof follows from definition directly.
\end{proof}
\begin{lemma}
If a sequence $\{x_n\}$ is weak $K$-divergent then it is also weak $I^K$-divergent.
\end{lemma}
\section{Acknowledgment}
The second author is grateful to the Department of Science and Technology, Govt. of India for providing fund on FIST project to the Department of Mathematics, University of Burdwan, W.B.


\end{document}